\def\RR{\mathbb{R}}
\def\div{\operatorname{div}}
\def\curl{\operatorname{curl}}
\def\hh{\mathbf{h}}
\def\hs{\hh_s}
\def\aa{\mathbf{a}}
\def\bb{\mathbf{b}}
\def\js{\mathbf{j}_s}
\def\nn{\mathbf{n}}
\def\Th{\mathcal{T}_h}
\def\uu{\mathbf{u}}
\def\vv{\mathbf{v}}
\def\JJ{\mathbf{J}}
\def\FF{\mathbf{F}}
\def\AA{\mathbf{A}}
\def\xx{\mathbf{x}}
\def\mmu{\boldsymbol{\mu}}
\def\xxi{\boldsymbol{\xi}}
\def\gg{\mathbf{g}}
\def\ee{\mathbf{e}}
\def\zz{\mathbf{z}}
\def\HH{\mathbf{H}}
\def\yy{\mathbf{y}}
\def\dd{\mathbf{d}}
\newtheorem{lemma}{Lemma}
\newtheorem{theorem}[lemma]{Theorem}
\theoremstyle{definition}
\newtheorem{remark}[lemma]{Remark}
\begin{document}
\title[Nonlinear solvers using Quasi-Newton updates]{On nonlinear magnetic field solvers using local Quasi-Newton updates}

\author{H. Egger$^{1,2}$, F. Engertsberger$^1$, L. Domenig$^3$, K. Roppert$^3$, and M. Kaltenbacher$^3$}

\address{%
\small 
$^1$Institute of Numerical Mathematics, Johannes Kepler University Linz, Austria \\
$^2$Johann Radon Institute for Computational and Applied Mathematics, Linz, Austria
\\
$^3$ Institute of Fundamentals and Theory
in Electrical Engineering, TU Graz, Austria}

\begin{abstract}
Fixed-point or Newton-methods are typically employed for the numerical solution of nonlinear systems arising from discretization of nonlinear magnetic field problems.
We here discuss an alternative strategy which uses local Quasi-Newton updates to construct appropriate linearizations of the material behavior during the nonlinear iteration.  
The resulting scheme shows similar fast convergence as the Newton-method but, like the fixed-point methods, does not require derivative information of the underlying material law. 
As a consequence, the method can be used for the efficient solution of models with hysteresis which involve nonsmooth material behavior. 
The implementation of the proposed scheme can be realized in standard finite-element codes in parallel to the fixed-point and the Newton method. 
A full convergence analysis of all three methods is established proving global mesh-independent convergence. 
The theoretical results and the performance of the nonlinear iterative schemes are evaluated by computational tests for a typical benchmark problem.
\end{abstract}

\maketitle % typeset the title of the contribution

\begin{quote}
\footnotesize
\textbf{Keywords:}
Nonlinear magnetostatics, finite-elements, iterative solvers, Quasi-Newton methods, global convergence, hysteresis
\end{quote}

\bigskip

\section{Introduction}
\label{sec:intro}

We consider the efficient numerical solution of nonlinear magnetic field problems arising in high-power low-frequency applications like electric machines and transformers \cite{Hrabovcova2020,Salon1995}. The governing equations under consideration are
\begin{alignat}{4}
\curl \hh &= \mathbf{j_s} \quad &\text{in } \Omega, \qquad && \bb &= \bb(\hh), \label{eq:1}\\
\div \bb &= 0 \quad & \text{in } \Omega, \qquad && \bb \cdot \mathbf{n} &= 0 \quad \text{on } \partial\Omega. \label{eq:2}
\end{alignat}
Here $\hh$ is the magnetic field intensity,  $\bb$ the magnetic flux, and $\js = \curl \hs$ the current density, which is represented by a given source field $\hs$. Following~\cite{Silvester91}, we represent the material behavior by a constitutive equation of the form
\begin{align} \label{eq:3}
\bb(\hh) &= \partial_{\hh} w_*(\hh),
\end{align}
with $w_*(\hh)$ the magnetic co-energy density. The material law~\eqref{eq:3} holds for every point $x$ in space and thus has to be understood as $\bb(\xx) = \partial_{\hh} w_*(\xx,\hh(\xx))$.
This relation allows to represent inhomogeneous and anisotropic nonlinear material behavior, including remanent magnetization. As illustrated later on, also the magnetic vector hysteresis models of \cite{Lavet2013,Prigozhin2016} can be phrased in this form.

\subsection*{Numerical solution}
The simulation of \eqref{eq:1}--\eqref{eq:3} is usually based on magnetic scalar or vector potential formulations and their discretization by finite-elements~\cite{Meunier2008}. 
The resulting nonlinear algebraic systems are then tackled by iterative solvers, e.g. fixed-point methods~\cite{Hantila2000}, the Newton-method \cite{Fujiwara2005,Borghi2004}, or certain variants~\cite{Takahashi2013,Dlala2008}.  
All these methods are based on some sort of linearization
\begin{align} \label{eq:4}
\delta \bb \approx \mmu_{loc} \delta \hh
\end{align}
of the nonlinear material law \eqref{eq:3} with $\mmu_{loc}$ representing a local approximation of the field-dependent permeability tensor. 
The Newton method, for instance, uses $\mmu_{loc} = \partial_{\hh} \bb(\hh)$ and thus relies on derivatives of the material law. Together with an appropriate selection of the step size, global linear and local quadratic convergence can be established~\cite{Nocedal2006}.
The choice $\mmu_{loc} = \bar \mmu_{loc}$, independent of the iteration index, results in the fixed-point method~\cite{Hantila2000}. 
Due to the strongly nonlinear behavior of ferromagnetic materials, the convergence of the method may be rather slow in typical applications. This can be partially alleviated by certain modifications~\cite{Takahashi2013,Dlala2007}.
An advantage of fixed-point schemes, on the other hand, is that no information about the derivative $\partial_{\hh} \bb(\hh)$ of the material law is required.
This allows the application to problems where $\bb(\hh)$ is not differentiable which is of relevance in the context of hysteresis~\cite{Lavet2013,Prigozhin2016}.

\subsection*{Scope and main contributions}
In this paper, we discuss an approach 
for choosing the local permeabilities $\mmu_{loc}$ in an iterative solution process that combines the advantages of the fixed-point and the Newton-methods, i.e.: 
\begin{itemize}
\item only access to the material law $\bb(\hh)$ is required;
\item fast convergence, similar to that of the Newton-method, is obtained. 
\end{itemize}
The key idea of the scheme is to employ the low-rank update formulas provided by Quasi-Newton methods, in order to extract significant information about the linearized material behavior, i.e., the  Jacobian $\partial_{\hh} \bb(\hh)$ or generalization of it, from previous values of $\hh$ and $\bb(\hh)$ which are accessible during simulation. 
Let us emphasize that, in contrast to the direct application of Quasi-Newton methods for the solution of large scale nonlinear problems~\cite{Nocedal1980,Geradin1981,Papadrakakis1991}, the Quasi-Newton updates here are performed locally on every element of the mesh, and they are used in order to obtain good candidates for the local permeabilities $\mmu_{loc}$ in the approximation \eqref{eq:4}. 
As a consequence, the resulting schemes can easily be implemented in existing finite-element codes without disturbing the natural sparsity and efficiency of finite-element computations. 

A similar idea has been used in our previous work~\cite{Domenig2024} for the numerical simulation of hysteresis phenomena.
We here consider different update rules and introduce additional modifications to ensure certain constraints on the local permeabilities.
Together with appropriate strategies for choosing the step size in the underlying iterative method, this allows us to establish a complete convergence analysis of the method proving global $r$-linear convergence
with iteration numbers that are independent of the finite-element mesh.
We here present our ideas and proofs in detail for a discretization of the scalar potential formulation of \eqref{eq:1}--\eqref{eq:3} by nodal finite-elements of lowest order. Our main arguments, however, apply also to vector potential formulations and higher order approximations. In principle, they can be generalized to some extent also to nonlinear differential equations arising in different application contexts.

\subsection*{Outline of the article}
In Section~\ref{sec:fem}, we introduce the scalar potential formulation of our problem and discuss a fully practible finite-element discretization.
The iterative solution of the resulting nonlinear systems is investigated in Section~\ref{sec:iter} and global convergence is proven for a wide class of iterative schemes, including the fixed-point and Newton-methods as special cases.
The assumptions required for our analysis are rather general and allow to describe smooth and non-smooth material behavior. 
In Section~\ref{sec:quasi}, we then discuss the application of  Quasi-Newton updates for the construction of local permeability tensors~$\mmu_{loc}$, which can immediately be utilized in the iterative methods of Section~\ref{sec:iter}. This strategy leads to fast, flexible, and reliable solvers for nonlinear finite-element analysis.
The performance of the proposed schemes is demonstrated and compared to alternative methods by numerical tests for a typical benchmark problem considering material models with and without hysteresis.

\section{Discretized scalar potential formulation}
\label{sec:fem}

Let us start with introducing in detail the model problem and its finite-element discretization to be considered in the remainder of the article. 
We assume that $\Omega \subset \RR^d$, $d=2,3$ is a bounded and simply connected Lipschitz domain. Then 
any function $\hh$ satisfying $\curl \hh = \js$ can be written in the form 
\begin{align}
\hh &= \hs - \nabla \psi,
\end{align}
with reduced scalar potential $\psi$, and the problem~\eqref{eq:1}--\eqref{eq:3} can be restated as~\cite{Meunier2008}
\begin{alignat}{2}
\div(\partial_{\hh} w_*(\hs - \nabla \psi)) &= 0 \qquad && \text{in } \Omega, \label{eq:bvp1}\\
\nn \cdot \partial_{\hh} w_*(\hs - \nabla \psi) &= 0 \qquad && \text{on } \partial\Omega. \label{eq:bvp2}
\end{alignat}
This nonlinear system determines $\psi$ uniquely up to a constant~\cite{Engertsberger23}. 

Now let $\Th$ be a triangular respectively tetrahedral finite-element mesh with elements~$T$ and vertices $\xx_i$, $i=0,\ldots,N$. We define the finite-element space
\begin{align}
V_h = \Big\{ v_h = \sum\nolimits_{i=1}^N v_i N_i, \, v_i \in \RR \Big\}
\end{align}
where $N_i$ is the nodal basis function associated with the vertex $\xx_i$~\cite{Braess2007}. 
Since the vertex $\xx_0$ is left out in the summation, the value $v_h(\xx_0)=0$ is fixed by construction for any $v_h \in V_h$. 
The finite-element approximation of the elliptic boundary value problem \eqref{eq:bvp1}--\eqref{eq:bvp2} then is to find $\psi_h \in V_h$ such that 
\begin{align} \label{eq:fem}
\langle \partial_{\hh} w_*(\hs - \nabla \psi_h), \nabla v_h\rangle_h &= 0 \qquad \forall v_h \in V_h.
\end{align}
Here and in the following, we use an approximation
\begin{align} \label{eq:numquad}
\langle \aa,\bb\rangle_h = \sum\nolimits_{T \in \Th} |T| \, \aa(\xx_T) \cdot \bb(\xx_T)
\end{align}
for the scalar product $\langle \aa,\bb\rangle_\Omega=\int_\Omega \aa(\xx) \cdot \bb(\xx) \, d\xx$ obtained by numerical quadrature using the barycenters $\xx_T$ of the elements~$T \in \Th$.
Let us note that 
\begin{align} \label{eq:norm}
\|\nabla \psi_h\|_h := \sqrt{\langle \psi_h,\psi_h\rangle_h}
\end{align}
defines a norm on $V_h$, since $\|\nabla \psi_h\|_h=0$ implies that $\nabla \psi_h = 0$, but by construction $\psi_h(\xx_0)=0$, and hence $\psi_h = 0$ everywhere on $\Omega$. 

The well-posedness of the discrete problem~\eqref{eq:fem} can be guaranteed under rather general assumptions on the problem data~\cite{Engertsberger23}. For completeness  and later reference, we state and prove the corresponding result in detail. 
\begin{theorem} \label{thm:1}
Let 
%$\Omega \subset \RR^d$, $d=2,3$ be a bounded and simply connected Lipschitz domain and 
$\Th$ be a 
%geometrically conforming
regular triangular resp. tetrahedral mesh of $\Omega \subset \RR^d$, $d=2,3$. 
Further let $\hs \in H(\curl;\Omega)$ and $w_* : \Omega \times \RR^d \to \RR$ be piecewise continuous with respect to $\xx$, and assume that for $\xx \in \Omega$ the function $w_*(\xx,\cdot)$,  is continuously differentiable and there exists constants $L,\gamma>0$ such that
\begin{align}
|\partial_{\hh} w_*(\xx,\hh_1) - \partial_{\hh} w_*(\xx,\hh_2)| &\le L |\hh_1-\hh_2| \\
\langle \partial_{\hh} w_*(\xx,\hh_1) - \partial_{\hh} w_*(\xx,\hh_2), \hh_1-\hh_2\rangle &\ge \gamma |\hh_1-\hh_2|^2
\end{align}
for all $\hh_1,\hh_2 \in \RR^d$. 
Then the system \eqref{eq:fem} has a unique solution $\psi_h \in V_h$.
\end{theorem}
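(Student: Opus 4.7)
The plan is to recast \eqref{eq:fem} as a single operator equation $F(\psi_h) = 0$ on the finite-dimensional Hilbert space $V_h$ equipped with the inner product $\langle \nabla \cdot, \nabla \cdot\rangle_h$ and the induced norm $\|\nabla \cdot\|_h$ from \eqref{eq:norm}, and then to invoke the classical existence--uniqueness theory for strongly monotone Lipschitz operators. Concretely, I would define $F : V_h \to V_h$ via the Riesz identification by
\begin{equation*}
\langle \nabla F(\psi_h), \nabla v_h\rangle_h = \langle \partial_{\hh} w_*(\hs - \nabla \psi_h), \nabla v_h\rangle_h \qquad \forall v_h \in V_h,
\end{equation*}
so that \eqref{eq:fem} becomes the equation $F(\psi_h) = 0$.

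The next step is to transfer the pointwise hypotheses on $\partial_{\hh} w_*$ to $F$. Using the Lipschitz bound inside each summand of the quadrature \eqref{eq:numquad}, followed by the discrete Cauchy--Schwarz inequality, I expect to obtain
\begin{equation*}
\|\nabla(F(\psi_h) - F(\phi_h))\|_h \le L \, \|\nabla(\psi_h - \phi_h)\|_h,
\end{equation*}
while the strong monotonicity assumption, applied termwise in the sum defining $\langle \cdot, \cdot\rangle_h$, yields
\begin{equation*}
\langle \nabla(F(\psi_h) - F(\phi_h)), \nabla(\psi_h - \phi_h)\rangle_h \ge \gamma \, \|\nabla(\psi_h - \phi_h)\|_h^2.
\end{equation*}
Both steps are direct because \eqref{eq:numquad} is a weighted sum with positive weights $|T|$.

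Uniqueness then follows at once: two solutions would make the left-hand side of the monotonicity bound vanish, forcing $\|\nabla(\psi_h - \phi_h)\|_h = 0$ and hence $\psi_h = \phi_h$, since $\|\nabla \cdot\|_h$ is a norm on $V_h$. For existence I would apply Banach's fixed point theorem to the damped iteration $T_\tau(\psi_h) := \psi_h - \tau F(\psi_h)$: the standard computation gives $\|\nabla(T_\tau \psi_h - T_\tau \phi_h)\|_h^2 \le (1 - 2\tau\gamma + \tau^2 L^2)\|\nabla(\psi_h - \phi_h)\|_h^2$, which is a contraction for any $\tau \in (0, 2\gamma/L^2)$, and whose unique fixed point satisfies $F(\psi_h) = 0$. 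Equivalently, one may exhibit $\psi_h$ as the unique minimizer of the strongly convex, coercive, continuous functional $J(\psi_h) = \sum_{T \in \Th} |T| \, w_*(\xx_T, \hs(\xx_T) - \nabla \psi_h(\xx_T))$ over $V_h$ by the direct method.

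I do not anticipate a substantial obstacle, as everything takes place in a finite-dimensional space and the abstract ingredients for Zarantonello's theorem are directly available from the pointwise assumptions. The only point requiring minor care is that $\hs \in H(\curl;\Omega)$ is in general not defined pointwise at the barycenters $\xx_T$; this is already implicit in the formulation \eqref{eq:fem} and is handled by interpreting $\hs(\xx_T)$ as the value of an elementwise continuous representative or a suitable projection of $\hs$, which does not alter the monotonicity argument.
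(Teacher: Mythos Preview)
Your proposal is correct. The paper's own proof takes the closely related but distinct route of exploiting the potential structure: it identifies the nonlinear system with $\nabla f(\uu)=0$ for the discrete co-energy $f(\uu)=\langle w_*(\hs-\sum_i u_i\nabla N_i),1\rangle_h$, verifies that the strong monotonicity of $\partial_{\hh} w_*$ makes $f$ strongly convex, and concludes that $f$ has a unique minimizer. Your primary argument instead treats $F$ as a strongly monotone Lipschitz operator and obtains existence via Zarantonello's damped fixed-point iteration, without invoking that $F$ is a gradient. Both are textbook arguments; yours is marginally more general in that it does not require the potential structure, while the paper's choice has the side benefit of setting up the energy functional and the strong-convexity estimate that are reused in the convergence proof of Theorem~\ref{thm:2}. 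The alternative you sketch at the end---minimizing $J(\psi_h)=\sum_T |T|\,w_*(\xx_T,\hs(\xx_T)-\nabla\psi_h(\xx_T))$---is exactly the paper's argument.
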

\begin{proof}
The finite-element problem \eqref{eq:fem} is equivalent to  the nonlinear system 
\begin{align} \label{eq:sys}
\FF(\uu) = 0 \qquad \text{in } \RR^N 
\end{align}
with $\psi_h(x) = \sum_i u_i N_i(x)$ and $\FF: \RR^N \to \RR^N$ defined by
\begin{align}
\FF_i(\uu) 
&= \langle \partial_{\hh} w_*\big(\hs - \sum\nolimits_{j=1}^N u_j \nabla N_j\big), \nabla N_i \rangle_h. 
\end{align}
It is not difficult to see, that $\FF(\uu) = - \nabla f(\uu)$ where 
\begin{align}
f(\uu) = \langle w_*(\hs - \sum\nolimits_i \uu_i \nabla N_i), 1 \rangle_h
\end{align}
denotes the discrete co-energy functional. From the required properties of $w_*$ we can deduce that $f : \RR^N \to \RR$ is continuously differentiable. 
Moreover 
\begin{align} \label{eq:convex}
f(\tilde \uu) &- f(\uu) - \langle \nabla f(\uu), \tilde \uu - \uu\rangle 
\ge \frac{\gamma}{2} | \tilde \uu - \uu |_h^2.
\end{align}
Here $| \tilde \uu - \uu|_h^2 := \|\nabla \tilde \psi_h - \nabla \psi_h\|^2_h$ with $\psi_h = \sum_i u_i N_i$ and $\tilde \psi_h = \sum_i \tilde u_i N_i$, is the norm on $\RR^N$ induced by the norm $\|\nabla \psi_h\|_h$ on $V_h$. 
The function $f : \RR^N \to \RR$ is thus strongly convex and continuously differentiable, and hence possesses a unique minimizer on $\RR^N$, which is characterized equivalently by the first order optimality condition $\nabla f(\uu)=0$. 
This already implies that \eqref{eq:sys} has a unique solution $\uu \in \RR^N$ which, in turn, corresponds to the unique finite-element solution $\psi_h = \sum_i u_i N_i \in V_h$ of the discretized variational problem~\eqref{eq:fem}.
\end{proof}

\section{Globally convergent iterative solvers}
\label{sec:iter}

For the numerical solution of \eqref{eq:fem}, we consider iterative methods of the form 
\begin{align} \label{eq:iter}
\psi_h^{n+1} = \psi_h^n + \tau^n \delta \psi_h^n, \qquad n \ge 0,
\end{align}
with increments $\delta \psi_h^n \in V_h$ defined by the linear problems 
\begin{align} \label{eq:linfem}
\langle \mmu^n \nabla \delta \psi_h^n, \nabla v_h\rangle_h &= \langle \partial_{\hh} w_*(\hs - \nabla \psi_h), \nabla v_h\rangle_h \qquad \forall v_h \in V_h.
\end{align}
Appropriate choices for the step size $\tau^n>0$ and the local permeabilities $\mmu^n$ allow us to establish the following convergence result for this iterative scheme. 

\begin{theorem} \label{thm:2}
Let the assumptions of Theorem~\ref{thm:1} be valid and $\psi_h \in V_h$ be the unique solution of \eqref{eq:fem}. 
Further let $\mmu^n_T \in \RR^{d \times d}$, $T \in \Th$ be symmetric and assume
\begin{align} \label{eq:mu}
\mu_1 |\xxi|^2 \le \xxi^\top \mmu^n_T \,\xxi \le \mu_2 |\xxi|^2 
\end{align}
for all $T \in \Th$ and $\xxi \in \RR^d$ with constants $\mu_1,\mu_2>0$. 
Then for any $\psi_h^0 \in V_h$ and any stepsize $\tau^n>0$, $n \ge 0$, the iteration \eqref{eq:iter}--\eqref{eq:linfem} is well-defined.
Moreover, there exists a step size rule $\tau^n = \tau(\psi_h^n,\mmu^n)$ such that 
\begin{align} \label{eq:convergence}
\| \nabla (\psi_h^n - \psi_h) \|_h \le C q^n \|\nabla (\psi_h^0 - \psi_h) \|_h, \qquad n \ge 0
\end{align}
with constants $C>0$ and $0<q<1$ only depending on $\gamma$, $L$, $c_1$, and $c_2$.
In particular, the iteration \eqref{eq:iter}--\eqref{eq:linfem} converges globally and $r$-linearly to the unique solution of \eqref{eq:fem}, and the convergence rate is independent of the mesh $\Th$.
\end{theorem}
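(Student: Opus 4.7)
The plan is to recognize the iteration \eqref{eq:iter}--\eqref{eq:linfem} as a preconditioned gradient method for the discrete co-energy $f$ defined in the proof of Theorem~\ref{thm:1}. Writing $a^n(u_h, v_h) = \langle \mmu^n \nabla u_h, \nabla v_h\rangle_h$, the bound \eqref{eq:mu} gives the spectral equivalence $\mu_1 \|\nabla v_h\|_h^2 \le a^n(v_h, v_h) \le \mu_2 \|\nabla v_h\|_h^2$, so that Lax--Milgram (equivalently, the SPD property of the associated stiffness matrix) yields a unique $\delta\psi_h^n \in V_h$ for any right-hand side in \eqref{eq:linfem}; this establishes well-definedness of the iteration for arbitrary $\tau^n > 0$. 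Identifying $f'(\psi_h)[v_h] = -\langle \partial_\hh w_*(\hs - \nabla\psi_h), \nabla v_h\rangle_h$, equation \eqref{eq:linfem} reads $a^n(\delta\psi_h^n, v_h) = -f'(\psi_h^n)[v_h]$, exhibiting $\delta\psi_h^n$ as the $a^n$-preconditioned negative gradient of $f$ at $\psi_h^n$.

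Two structural estimates drive the analysis, both measured in the norm $\|\nabla\cdot\|_h$ and its dual $|\cdot|_*$: the $\gamma$-strong convexity \eqref{eq:convex} of $f$ from the proof of Theorem~\ref{thm:1}, and the $L$-Lipschitz continuity of $f'$, which follows from the Lipschitz assumption on $\partial_\hh w_*$ via an elementwise Cauchy--Schwarz inequality in the quadrature pairing \eqref{eq:numquad}. Standard arguments then produce the descent lemma $f(\psi_h + s_h) \le f(\psi_h) + f'(\psi_h)[s_h] + \tfrac{L}{2}\|\nabla s_h\|_h^2$ and the Polyak--\L{}ojasiewicz inequality $|f'(\psi_h^n)|_*^2 \ge 2\gamma\,(f(\psi_h^n) - f(\psi_h))$.

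Choosing $\tau^n = \mu_1/L$ (or an Armijo rule giving the same rate), I would apply the descent lemma to $s_h = \tau^n \delta\psi_h^n$ and use $\|\nabla\delta\psi_h^n\|_h^2 \le a^n(\delta\psi_h^n,\delta\psi_h^n)/\mu_1$ to obtain the per-step energy decrease $f(\psi_h^{n+1}) - f(\psi_h^n) \le -\tfrac{\mu_1}{2L}\,a^n(\delta\psi_h^n,\delta\psi_h^n)$. A Cauchy--Schwarz argument in the $a^n$-inner product, combined with the upper bound in \eqref{eq:mu}, gives $a^n(\delta\psi_h^n,\delta\psi_h^n) \ge |f'(\psi_h^n)|_*^2/\mu_2$, and invoking Polyak--\L{}ojasiewicz then yields $f(\psi_h^{n+1}) - f(\psi_h) \le q^2\,(f(\psi_h^n) - f(\psi_h))$ with $q^2 = 1 - \mu_1\gamma/(\mu_2 L) \in (0,1)$. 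Sandwiching $f(\psi_h^n) - f(\psi_h)$ between $\tfrac{\gamma}{2}\|\nabla(\psi_h^n - \psi_h)\|_h^2$ from below (strong convexity at the minimizer $\psi_h$, where $f'=0$) and $\tfrac{L}{2}\|\nabla(\psi_h^0 - \psi_h)\|_h^2$ from above at $n=0$ (descent lemma at the minimizer) converts the geometric energy decay into \eqref{eq:convergence} with $C = \sqrt{L/\gamma}$.

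The main obstacle I foresee is not the convex-optimization argument itself, which becomes textbook once the right abstract setting is identified, but the careful extraction of \emph{mesh-independent} constants. The crucial point is that strong convexity, the Lipschitz property of $f'$, and the spectral bounds on $a^n$ must all be phrased in terms of the single norm $\|\nabla\cdot\|_h$ and its dual, so that no inverse inequality with mesh-dependent factors enters the rate. Verifying the dual-norm Lipschitz estimate for $f'$ via the quadrature rule \eqref{eq:numquad}, and handling the step-size bookkeeping so that the constants $C$ and $q$ genuinely depend only on $\gamma, L, \mu_1, \mu_2$, is the most delicate part.
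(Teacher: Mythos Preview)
Your proposal is correct and follows essentially the same route as the paper: both view \eqref{eq:iter}--\eqref{eq:linfem} as a preconditioned gradient method for the discrete co-energy, establish well-definedness via the spectral bounds \eqref{eq:mu}, derive a per-step energy decrease from the Lipschitz descent lemma in the $\|\nabla\cdot\|_h$-norm, and then convert geometric energy decay into \eqref{eq:convergence} using the sandwich $\tfrac{\gamma}{2}\|\nabla(v_h-\psi_h)\|_h^2 \le f(v_h)-f(\psi_h) \le \tfrac{L}{2}\|\nabla(v_h-\psi_h)\|_h^2$. The only notable variation is that you lower-bound $a^n(\delta\psi_h^n,\delta\psi_h^n)$ via the dual norm of $f'$ and the Polyak--\L{}ojasiewicz inequality, whereas the paper reaches the analogous bound directly from the strong monotonicity of $\partial_{\hh} w_*$ (its Step~4); your route in fact yields a slightly sharper contraction factor $1-\mu_1\gamma/(\mu_2 L)$.
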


\subsection*{Sketch of the proof.}
Using the notation introduced in the proof of the previous theorem, we can state the iteration \eqref{eq:iter}--\eqref{eq:linfem} equivalently as 
\begin{align} \label{eq:itersys}
\uu^{n+1} = \uu^n + \tau^n \delta \uu^n
\qquad \text{with} \qquad 
\AA^n \delta \uu^n = - \nabla f(\uu^n)   
\end{align}
with matrix $\AA^n \in \RR^{N \times N}$ defined by $A^n_{ij} = \langle \mmu^n \nabla N_j, \nabla N_i\rangle_h$. 
From this definition and using $\psi_h = \sum_i u_i N_i$, we see that 
\begin{align}
\langle \AA^n \uu, \uu\rangle 
&= \langle \mmu^n \nabla \psi_h, \nabla \psi\rangle_h 
\ge \mu_1 \|\nabla \psi_h\|^2_h = \mu_1 |\uu|^2_h.
\end{align}
Hence $\AA^n$ is symmetric and positive definite, and the iteration is well-defined.
Furthermore, the increment $\delta \uu^n := -(\AA^n)^{-1} \nabla f(\uu^n)$ is a descent direction for minimizing $f(\uu)$.
The global linear convergence of the iterates $\uu^n$ for appropriate stepsize $\tau^n$ to the unique minimizer $\uu$ of $f(\uu)$ then follows from standard results about optimization algorithms~\cite{Nocedal2006}.
A complete proof of the theorem, in particular showing validity of \eqref{eq:convergence} with constants $C$, $q$ that are independent of the mesh $\Th$, is provided in the appendix.

\begin{remark} \label{rem:armijo}
One possible choice of the step size in \eqref{eq:itersys} is $\tau^n=\tau$ constant with $\tau>0$ sufficiently small. In our numerical tests later on, we instead use \emph{Armijo-backtracking}~\cite{Nocedal2006}, i.e.,   
we fix $\tau_{max}>0$ and $0<\sigma,\rho<1$, and choose
\begin{align} \label{eq:armijo}
\tau^n = \max\{&\tau=\tau_{max} \rho^m, \quad m \ge 0, \quad \text{such that}  \\
&f(\uu^n + \tau \delta \uu^n) \le f(\uu^n) + \tau \sigma \langle \nabla f(\uu^n), \delta \uu^n\rangle\}    \notag
\end{align}
As shown in the appendix, this choice is well-defined and allows to apply the assertions of Theorem~\ref{thm:2} leading to global mesh-independent convergence.
\end{remark}

\begin{remark} \label{rem:newton}
The choice $\mmu^n_T = \bar \mmu_T$ with given symmetric positive definite matrices $\bar \mmu_T$ for all $T \in \Th$ leads to a fixed-point iteration with adaptive step size. 
If the co-energy density $w_*(\xx,\hh)$ is twice continuously differentiable in $\hh$, one may also define $\mmu^n_T = \partial_{\hh\hh} w_*(\xx_T,\hs(\xx_T) - \nabla \psi_h^n(\xx_T))$. This yields the Newton-method, again in combination with appropriate line search. 
The global linear convergence of both methods can thus be deduced immediately from Theorem~\ref{thm:2}. 
\end{remark}

\section{A method using local Quasi-Newton updates}
\label{sec:quasi}
We now discuss another choice for local permeability tensors $\mmu^n_T$ in \eqref{eq:linfem} which requires only the evaluation of first derivatives $\partial_{\hh} w_*(\hh)$, but leads to fast convergence of the iterative scheme, similar to that of the Newton method.

\subsection{Quasi-Newton methods}
We start with recalling some results from literature~\cite{Nocedal2006,Deuflhard2011}. 
Let $\gg: \RR^d \to \RR^d$ be continuously differentiable. A Quasi-Newton method aims to solve a nonlinear equation $\gg(\zz)=\mathbf{0}$ by an iterative process
\begin{align}
\zz^{n+1} = \zz^n - \tau^n (\HH^n)^{-1} \gg(\zz^n), \qquad n \ge 0,
\end{align}
with step size $\tau^n>0$ and regular matrices $\HH^n \in \RR^{d \times d}$, $n \ge 0$. 
To ensure fast convergence, the matrices $\HH^n$ should approximate the Jacobian $D \gg(\zz^n)$ in an appropriate manner, i.e., such that the tangent condition
\begin{align}
\HH^{n+1} (\zz^{n+1} - \zz^n)  = \gg(\zz^{n+1})-\gg(\zz^n)
\end{align}
is met. Validity of this condition can be achieved by low-rank update formulas for the matrices $\HH^n$, which can in general be phrased as  
\begin{align}
\HH^{n+1} = \mathcal{QN}(\HH^n,\dd^n,\yy^n).
\end{align}
Here $\dd^n=\zz^{n+1}-\zz^n$ and $\yy^{n}=\gg(\zz^{n+1}) - \gg(\zz^n)$ denote the increments in the arguments and values of the function $\gg$. 
The iteration is initialized by prescribing $\HH^0$ and $\zz^0$. 
Two well-known and widely used Quasi-Newton schemes are the BFGS and DFP methods, whose update formulas are given by
\begin{align*}
\mathcal{QN}_{BFGS}(\HH,\dd,\yy) &= \HH + \frac{\yy \yy^\top}{\yy^\top \dd} - \frac{\HH \dd \dd^\top \HH^\top}{\dd^\top \HH \dd} \\
\mathcal{QN}_{DFP}(\HH,\dd,\yy) &= \HH + \frac{(\yy-\HH \dd) \yy^\top + \yy (\yy-\HH \dd)^\top}{\yy^\top \dd} - \frac{(\yy-\HH \dd)^\top \dd}{(\yy^\top \dd)^2} \yy \yy^\top.
\end{align*}
For properties of the resulting methods, alternative update rules, and further information about Quasi-Newton methods, let us refer to~\cite{Nocedal2006,Deuflhard2011}.

\subsection{Local Quasi-Newton updates}
In the following, we employ the low-rank update formulas of Quasi-Newton methods to define candidates for the local permeabilities $\mmu^n$ in~\eqref{eq:linfem}. Together with some modifications that are required in order to guarantee the assumptions of Theorem~\ref{thm:2}, we obtain the scheme summarized in Algorithm~\ref{alg:nu}.
\begingroup
\IncMargin{1em}
\begin{algorithm}
\textbf{Input:} $\psi_h^0 \in V_h$ and  $\bar \mmu_T \in \RR^{d \times d}$, $T \in \Th$, s.p.d. matrices satisfying \eqref{eq:mu}\\
\textbf{Output:}
Approximate solution $\psi_h$ for \eqref{eq:fem}\\
\smallskip
\tcp{Initialization}
\For{$T\in\Th$}{
set $\mmu^0_T=\bar \mmu_T$, $\hh^0_T = \hs(x_T) - \nabla \psi_h^0(x_T)$, and $\bb^0_T = \partial_{\hh} w_*(\hh_T)$\;
}
\tcp{Iteration}
\For{$n \geq 0$}{
  compute $\delta \psi_h^n$ by solving \eqref{eq:linfem}\; 
  determine step size $\tau^n=\tau(\psi_h^n,\mmu^n)$\; 
  update $\psi_h^{n+1}=\psi_h^n + \tau^n \delta \psi_h^n$\;
  \If{convergence criteria met}{
      \textbf{return} $\psi_h^n$\;
  }
  \For{$T \in \Th$}{
    set $\hh^n_T = \hs(x_T) - \nabla \psi_h^n(x_T)$ and $\bb^n_T = \partial_{\hh} w_*(x_T,\hh^n_T)$\;
    compute $\mmu^{n+1}_T =  \mathcal{QN}(\mmu^n_T,\hh^n_T - \hh^{n-1}_T,\bb^n_T - \bb^{n-1}_T)$\; 
    \tcp{Modifications}
    \If{$\mmu^{n+1}_T$ is not symmetric}{ 
        set $\mmu^{n+1}_T = \frac{1}{2} (\mmu^{n+1}_T + (\mmu^{n+1}_T)^\top)$\;
    }
    \If{\eqref{eq:mu} is not satisfied}{ 
        set $\mmu^n_T = \bar \mmu_T$\;
    }
  } % endfor
} % endfor
\caption{Iteration with local Quasi-Newton updates}
\label{alg:nu}
\end{algorithm}
\endgroup
% \begin{algorithm}
% \caption{Iteration with local Quasi-Newton updates}
% \begin{algorithmic}
% \Require $\psi_h^0 \in V_h$ and  $\bar \mmu_T \in \RR^{d \times d}$, $T \in \Th$, s.p.d. matrices satisfying \eqref{eq:mu}
% \For{$T \in \Th$}
%         \State set $\mmu^0_T=\bar \mmu_T$, $\hh^0_T = \hs(x_T) - \nabla \psi_h^0(x_T)$, and $\bb^0_T = \partial_{\hh} w_*(\hh_T)$
% \EndFor
% \For{$n \geq 0$}
%     \State compute $\delta \psi_h^n$ by solving \eqref{eq:linfem}
%     \State determine step size $\tau^n=\tau(\psi_h^n,\mmu^n)$;
%     \State update $\psi_h^{n+1}=\psi_h^n + \tau^n \delta \psi_h^n$;
%    \If{convergence criteria met} 
%         \State \textbf{return} $\psi^n$
%     \EndIf 
%      \For{$T \in \Th$}
%         \State set $\hh^n_T = \hs(x_T) - \nabla \psi_h^n(x_T)$ and $\bb^n_T = \partial_{\hh} w_*(x_T,\hh^n_T)$;
%          \State compute $\mmu^{n+1}_T =  \mathcal{QN}(\mmu^n_T,\hh^n_T - \hh^{n-1}_T,\bb^n_T - \bb^{n-1}_T)$; 
%           \If{$\mmu^{n+1}_T$ is not symmetric} 
%                 \State Set $\mmu^{n+1}_T = \frac{1}{2} (\mmu^{n+1}_T + (\mmu^{n+1}_T)^\top)$
%          \EndIf 
%          \If{\eqref{eq:mu} is not satisfied} 
%                 \State Set $\mmu^n_T = \bar \mmu_T$
%          \EndIf 
%     \EndFor
% \EndFor
% \end{algorithmic}
% \label{alg:nu}
% \end{algorithm}
%
The two modifications after the Quasi-Newton updates allow us to guarantee the conditions on $\mmu^n$ required in Theorem~\ref{thm:2}. 
Alternative modifications guaranteeing \eqref{eq:mu} could be used instead without perturbing validity of our theoretical results; see Section~\ref{sec:num} for an example.

\newpage
\noindent
Together with an appropriate step size choice, we obtain the following result.
\begin{theorem}
\label{thm:3}
Under the assumptions of Theorem~\ref{thm:1}, Algorithm~\ref{alg:nu} is well-defined and the resulting local permeabilities $\mmu^n_T$ obtained during the iteration satisfy~\eqref{eq:mu}. As a consequence, the assertions of Theorem~\ref{thm:2} hold true.
In particular, together with a constant step size $\tau^n = \bar \tau$ sufficiently small, or with Armijo-backtracking~\ref{eq:armijo}, we get global $r$-linear mesh-independent convergence.
\end{theorem}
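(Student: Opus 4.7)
The plan is to show that the two explicit modifications in Algorithm~\ref{alg:nu} enforce the hypotheses of Theorem~\ref{thm:2} at every step of the iteration, so that the claimed convergence follows directly from that theorem applied to the sequence of local permeabilities produced by the algorithm.

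First, I would argue by induction on $n$ that every $\mmu^n_T$ is symmetric and satisfies~\eqref{eq:mu} with the same constants $\mu_1,\mu_2>0$ fixed in advance from the initialization $\bar\mmu_T$. The base case is immediate, since the input $\bar\mmu_T$ is assumed to be symmetric positive definite satisfying~\eqref{eq:mu}. For the inductive step, after the low-rank update $\mmu^{n+1}_T = \mathcal{QN}(\mmu^n_T,\hh^n_T-\hh^{n-1}_T,\bb^n_T-\bb^{n-1}_T)$, the first modification explicitly symmetrizes $\mmu^{n+1}_T$ when needed, and the second modification resets it to $\bar\mmu_T$ whenever the spectral condition~\eqref{eq:mu} fails. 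Hence the tensor used in the next linear solve is always symmetric and its spectrum is trapped in $[\mu_1,\mu_2]$, which matches precisely the assumption of Theorem~\ref{thm:2}.

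Second, I would verify that Algorithm~\ref{alg:nu} is well-posed. As observed in the proof sketch of Theorem~\ref{thm:2}, whenever $\mmu^n_T$ is symmetric with spectrum in $[\mu_1,\mu_2]$ for every $T\in\Th$, the stiffness-type matrix $\AA^n$ defined by $A^n_{ij}=\langle \mmu^n\nabla N_j,\nabla N_i\rangle_h$ is symmetric positive definite, so the linear problem~\eqref{eq:linfem} admits a unique increment $\delta\psi_h^n$. The step size $\tau^n$ is then produced either by a constant $\bar\tau$ chosen sufficiently small, or by the Armijo-backtracking rule~\eqref{eq:armijo}; the latter terminates in finitely many back-tracking steps because $\delta\uu^n$ is a descent direction for the strongly convex functional $f$ from the proof of Theorem~\ref{thm:1}, exactly as invoked in Remark~\ref{rem:armijo}.

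Finally, since the sequence of local permeabilities satisfies~\eqref{eq:mu} uniformly in $n$, Theorem~\ref{thm:2} directly yields the bound~\eqref{eq:convergence} with constants $C,q$ depending only on $\gamma,L,\mu_1,\mu_2$, and therefore independent of the mesh $\Th$. I do not expect any substantive obstacle: the proof is essentially a bookkeeping verification that the algorithmic safeguards enforce the exact hypotheses of the convergence theorem. The one subtle point worth flagging is that the test in the second modification must be performed against thresholds $\mu_1,\mu_2$ fixed once and for all at the outset (e.g.\ inherited from $\bar\mmu_T$), so that the resulting constants $C,q$ in~\eqref{eq:convergence} are themselves independent of $n$ and of $h$, which is exactly what is needed for the mesh-independence statement.
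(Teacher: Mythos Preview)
Your proposal is correct and matches the paper's own argument: the paper simply states that the claim ``is a direct consequence of the properties of $\mmu^n$, the assertion of Theorem~\ref{thm:2}, and Remark~\ref{rem:armijo2},'' which is precisely the verification-and-apply structure you outline. If anything, your write-up is more explicit than the paper's, and your observation that the thresholds $\mu_1,\mu_2$ must be fixed once at the outset is a useful clarification.
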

The claim is a direct consequence of the properties of $\mmu^n$, the assertion of Theorem~\ref{thm:2}, and Remark~\ref{rem:armijo2}. Details can be found in the appendix.
\begin{remark}
Let us emphasize that, compared to the fixed-point or Newton-method, the above algorithm only requires a different routine to setup the local permeability matrices $\mmu_T^n$. The proposed scheme can thus be integrated easily into existing finite-element codes, and the sparsity of the finite-element matrices is not affected. Hence the computational effort for performing one single iteration of the fixed-point, Newton-, or the proposed scheme is very similar. 
\end{remark}

\section{Numerical Results}
\label{sec:num}

To illustrate the performance of the proposed method and to validate the theoretical result of Theorem~\ref{thm:2}, we now present some numerical tests which are inspired by the TEAM benchmark problem 32 \cite{teamproblem32}. 
The setup of this problem justifies the reduction to two space dimensions. The domain $\Omega \subset \RR^2$, depicted in Figure~\ref{fig:1}, thus represents a cross-section of the physical domain. 
\begin{figure}[ht!]
    \centering
    \bigskip
    \includegraphics[trim={2cm 0.8cm 0cm 0.8cm},clip,width=0.49\linewidth]{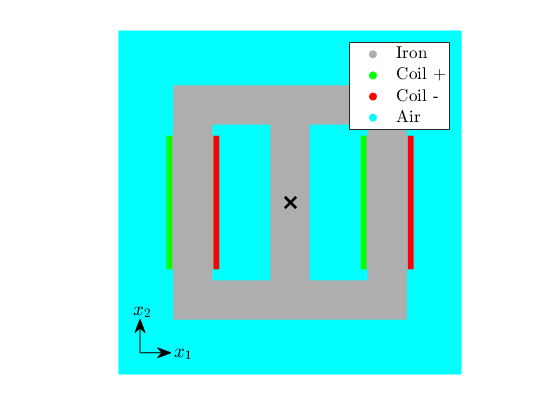}
    \includegraphics[trim={2cm 0.8cm 0cm 0.8cm},clip,width=0.49\linewidth]{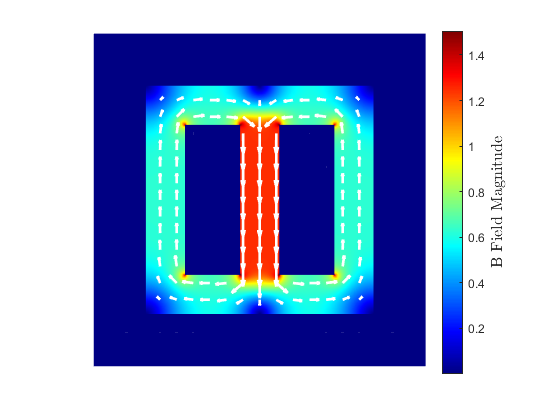}
    \caption{Left: Sketch of the geometry used in our computations with iron (grey), coil (green and red), air (cyan). Field evaluations at the point C6 are considered later on. Right: typical magnetic flux distribution $|\bb| = |\partial_{\hh} w_*(\hh)|$ in our tests.}
    \label{fig:1}
\end{figure}
Further note that the magnetic field and flux here only have in-plane ($x$-$y$) components, while the current density and magnetic vector potential only have a non-trivial $z$-component which can be represented by a scalar field. 
Details about the geometry and the setting can be found in \cite{teamproblem32,Meunier2008}.

\subsection*{General simulation setup}
The domain $\Omega$ is occupied by different materials, i.e., air, copper, and iron; see Figure~\ref{fig:1}.
For copper and air, we choose the linear relation $\bb=\mu_0 \hh$ with $\mu_0 = 4 \pi \, 10^{-7} \, \text{H/m}$ denoting the permeability of vacuum. 
This amounts to a co-energy density $w_*(\hh)=\frac{\mu_0}{2} |\hh|^2$.
To describe the magnetic behavior of the iron, we consider different nonlinear material models with and without hysteresis; details will be specified below. 
The current density is prescribed as a piecewise constant functions with $\js(\xx) = \pm j_s(x_1,x_2) \, \ee_{x_3} \, \text{A/m$^2$}$ on the copper domains and $\js(\xx)=\mathbf{0}$ elsewhere. 
The scalar magnitude $j_s$ of the current will be specified explicitly in our examples.
A piecewise linear source field $\hs$ satisfying $\curl \hs = \js$ here can be constructed analytically.

For our computational tests, we use the finite-element discretization introduced in Section~\ref{sec:fem} over appropriate triangulations of the domain $\Omega$.
All computations are carried out in \textsc{Matlab}.
The iteration \eqref{eq:iter}--\eqref{eq:linfem} is used for the solution of the resulting nonlinear systems. Different choices of the local permeabilities $\mmu^n$ are compared, corresponding to the fixed-point, the Newton, and the proposed scheme with  local Quasi-Newton updates. For the latter method condition \eqref{eq:mu} is not automatically satisfied, hence we enforce it by an eigenvalue decomposition of the $2 \times 2$ tensors $\mmu^n_T$ on every quadrature point and projection of the eigenvalues into the interval $[\mu_1,\mu_2]$. 
This is slightly different to simply setting $\mmu_T^n = \bar \mmu_T$, as stated in Algorithm~\ref{alg:nu}, but does not change the assertions of Theorem~\ref{thm:3}.
Armijo-backtracking \eqref{eq:armijo} with $\sigma=0.1$, $\rho=0.5$, and $\tau_{max} = 1$ is used for choosing the step size~$\tau^n$. 
The iteration \eqref{eq:iter}--\eqref{eq:linfem} is stopped, as soon as the difference of co-energies $|f(\uu^{n+1}) - f(\uu^n)|$ is smaller than $\text{tol}=10^{-8}\,f(\uu^0)$. From \eqref{eq:step5} one can see that $|f(\uu^{n+1}) - f(\uu^n)| \approx |\uu^n - \uu|_h^2 = \|\nabla (\psi_h^n - \psi_h)\|_h^2$, hence this is a good measure for the iteration error.

\subsection{Nonlinear magnetostatics}
The first test case is concerned with a magnetostatic problem without hysteresis. 
As a material law for iron, we choose 
\begin{align}
w_*(\hh) = \frac{\mu_0}{2} |\hh|^2 + U_*(\hh)
\end{align}
with internal energy density $U_*(\hh) = \frac{2 \JJ_s}{\pi}\Big( |\hh| \arctan\big( \frac{|\hh|}{A} \big) - \frac{1}{2} A \log\big( A^2 + |\hh|^2 \big) \Big)$. 
This amounts to the nonlinear material law
\begin{align}
    \label{eqn:material:law:anhysteretic}
    \bb = \partial_{\hh} w_*(\hh) = \mu_0  \hh + \frac{2 \JJ_s}{\pi} \arctan\Big(\frac{| \hh |}{A}\Big) \frac{\hh}{| \hh |} ,
\end{align}
which is frequently applied in the literature~\cite{Prigozhin2016}.
Let us note that this function satisfies the assumptions of Theorem~\ref{thm:2} with $\gamma=\mu_0$ and $L = \mu_0 + \frac{2 \JJ_s}{\pi A}$.
Moreover, the co-energy density $w_*(\hh)$ here is twice differentiable with  
\begin{align*}
%\partial_{\hh} \bb(\hh) =
\partial_{\hh\hh} w_*(\hh)
= \mu_0 I + \frac{2 \JJ_s}{\pi} \Big(\frac{\arctan(| \hh |/A)}{|\hh|} (I - \frac{\hh \, \hh^\top}{|\hh|^2}) + \frac{1}{A (1 + \frac{|\hh|^2}{A^2})} \frac{\hh\,\hh^\top}{|\hh|^2}\Big) 
\end{align*}
One can see that $\mmu^n = \partial_{\hh} \bb(\hh)$ satisfies \eqref{eq:mu} with $\mu_1 = \gamma$ and $\mu_2= L$, as specified above, which allows to show convergence of the Newton method. 

\subsection*{Numerical results}
For our computations, we choose 
the material parameters as $A=90.302 \, \text{A/m}$ and $\JJ_s=1.5733 \, \text{T}$, and we consider $j_s= 1 \times 10^5 \, \text{A/m$^2$}$ as amplitude of the excitation current.  
In Table~\ref{tab:1}, we compare the performance of the Newton-method, the fixed-point iteration, and the scheme discussed in Section~\ref{sec:quasi} using two different choices of local Quasi-Newton updates. 
For the fixed point method we use the constant tensor $ \bar \mmu_T = \mu_0 \mathbf{I}$ for all $T \in \Th$.
\begin{table}[ht!]
\centering
\setlength\tabcolsep{1.5ex}
\renewcommand{\arraystretch}{1.1}
\begin{tabular}{c||ccccc} 
 & \multicolumn{5}{c}{Iterations} \\
dof & Newton & Fixpoint & BFGS & DFP &   \\
\hline
\hline
$1477$ &  5 & 29 & 12 & 11 &   \\
$5789$ &  5 & 31 & 12 & 11 &   \\
$22921$ &  5 & 30 & 17 & 11 &   \\
$91217$ &  5 & 33 & 17 & 11 &   \\
\end{tabular}
\medskip
\caption{Iteration numbers for different methods and refinement levels for problem with nonlinear anhysteretic material law.}
\label{tab:1}
\end{table}

Note that in perfect agreement with Theorem~\ref{thm:2}, the number of iterations required for the different methods is independent of the mesh size. 
Obviously, the Newton-method converges with the fewest iterations which also perfectly meets our expectations.
While the convergence of the fixed-point iteration is substantially slower, the performance of the two methods using local Quasi-Newton updates is rather similar to that of the Newton method. 
Recall that these methods only use evaluations of the material law $\bb=\bb(\hh)$, but not of the derivative $\partial_{\hh} \bb(\hh)$ as required by the Newton-method. 
The bounds~\eqref{eq:mu} are automatically satisfied for the Newton- and the fixed-point method. For the two schemes using local Quasi-Newton updates, a truncation of the local permeabilities $\mmu_T^n$ was required from time- to time in order to satisfy the bounds~\eqref{eq:mu} and thus guarantee the global mesh-independent convergence of the methods.

\subsection{A vector hysteresis model}
\label{subsec:hysteresis}
In the second example, we use a vector hysteresis model to represent the material behavior of iron. To fit into our setting, we start with defining the co-energy density by 
\begin{align}
\label{eqn:co-energy:hysterese}
w_*(\hh) = \frac{\mu_0}{2} |\hh|^2 + \sum\nolimits_{k = 1}^M w_k \sup\nolimits_{\JJ_k} \Big(\langle \hh, \JJ_k \rangle - U(\JJ_k) - \chi_k |\JJ_k - \JJ_{k,p}|\Big) 
\end{align}
with $U(\JJ) = -\frac{2 A \JJ_s}{\pi} \log(\cos(\frac{\pi}{2}\frac{\JJ}{\JJ_s}))$ and $w_k,\,\chi_k  \geq 0$. The maximization problems in the second term have unique solutions $\JJ_k =\JJ_k(\hh)$, so the co-energy density $w_*(\hh)$ is well-defined.
One can further show that \eqref{eqn:co-energy:hysterese} is continuously differentiable and by Danskin's theorem~\cite{Danskin1966}, the derivative is given by
\begin{align}
\label{eqn:material:law:hysterese}
\bb = \partial_{\hh} w_*(\hh) = \mu_0 \hh + \sum\nolimits_{k = 1}^M  w_k \, \JJ_k(\hh).
\end{align}
This exactly amounts to the vector hysteresis model proposed in~\cite{Lavet2013}. 
%
%\begin{remark}
The evaluation of $w_*(\hh)$ and $\partial_{\hh} w_*(\hh)$ here involves the solution of inner maximization problems which, however, can be done efficiently in a numerical implementation~\cite{Prigozhin2016,Kaltenbacher2022}. 
The constitutive law \eqref{eqn:material:law:hysterese} can further be shown to be Lipschitz continuous and strongly monotone, such that the assumptions of Theorem~\ref{thm:1} are satisfied. 
Due to the non-smooth term in~\eqref{eqn:co-energy:hysterese}, it is however not differentiable, so that the classical Newton method cannot be applied in general. Semismooth Newton methods, however, may still be applicable~\cite{Ulbrich2011,Willerich2019}.
%
%\end{remark}

\begin{remark}
For $M = 1, w_k = 1, \chi_1 =0$, the solution $\JJ=\JJ_1$ of the inner maximization problem is given by $
\JJ(\hh) = \partial_{\hh} U_*(\hh) = \frac{2 \JJ_s}{\pi} \arctan\Big(\frac{| \hh |}{A}\Big) \frac{\hh}{| \hh |}.
$
In this case, the model \eqref{eqn:material:law:hysterese} exactly amounts to the material law \eqref{eqn:material:law:anhysteretic} considered before.
The above formulas thus provide a natural generalization of anhysteretic material models to the hysteretic case. 
\end{remark}

\subsection*{Numerica results.}
For our computations, we consider the model \eqref{eqn:co-energy:hysterese} with parameters $\chi_k, w_k$ taken from \cite{Domenig2024}.
The magnetic polarizations are set to $\JJ_{k,p} = 0$.
We otherwise choose the same simulation setup as in the previous example and again use $j_s= 1\times 10^5 \, \text{A/m$^2$}$ as amplitude for the excitation current. 
The iterative scheme \eqref{eq:iter}--\eqref{eq:linfem} with local permeabilites $\mmu^n_T=\bar \mmu_T$ fixed or defined by local Quasi-Newton updates is applied. 
In Table~\ref{tab:2}, we summarize the corresponding results.

\begin{table}[ht!]
\centering
\setlength\tabcolsep{1.5ex}
\renewcommand{\arraystretch}{1.1}
\begin{tabular}{c||ccccc} 
 & \multicolumn{4}{c}{Iterations} \\
dof & Fixpoint & BFGS & DFP &   \\
\hline
\hline
$1477$ & 53 & 14 & 10 &   \\
$5789$ &  58 & 14 & 10 &   \\
$22921$ &  55 & 14 & 11 &   \\
$91217$ &  58 & 16 & 11 &   \\
\end{tabular}
\caption{Iteration numbers for different methods and refinement levels for the test problem with hysteresis model.}
\label{tab:2}
\end{table}
We observe iteration numbers, which are comparable to the anhysteretic results in Table \ref{tab:1}. 
Similar to the previous examples, a truncation of the local permeabilities $\mmu_T^n$ was required sometimes for the schemes using Quasi-Newton updates in order to satisfy the bounds~\eqref{eq:mu} required for our analysis.

\subsection{Simulation of a typical load cycle}

As a last example, we deal with a typical situation arising in practice. Here we consider the excitation by a sequence of load currents $\js = \js(t_i)$, $i=1,\ldots,N$, which are taken from test case~2 of TEAM~problem~32~\cite{teamproblem32}. 
We are then mainly interested in the average performance of the proposed methods over the full load cycle.
%
%The magnetostatic problems to be solved in such a time-dependent load cycle are not completely independent. Hence the solution of the previous time-step can be used as initial guess for the next one. For the model with hysteresis, the magnetic polarizations $\JJ_k(t_{i-1})$ computed on the previous time step will be used as values for $\JJ_{p,i}(t_i)$.

\subsection*{Anhysteretic material model}
In the absence of hysteresis, the corresponding magnetostatic problems can be solved independently from each other. 
Since the excitation currents are smoothly varying in time, we may however choose the solution of the previous time step as the initial guess for the next computation. 
In Table~\ref{table:anhysteretic:multiple:timesteps}, we report about the corresponding computations. 
\begin{table}[ht!]
\centering
\setlength\tabcolsep{1.5ex}
\renewcommand{\arraystretch}{1.1}
\begin{tabular}{c||ccccc} 
 & \multicolumn{5}{c}{Average iterations} \\
dof & Newton & Fixpoint & BFGS & DFP &   \\
\hline
\hline
$1477$ &  3.5 & 13.9 & 3.6 & 3.5&   \\
$5789$ &  3.5 & 15.4 & 3.6 & 3.5 &   \\
$22921$ &  3.5 & 16.2 & 3.6 & 3.5 &   \\
$91217$ &  3.5 & 17.2 & 3.6 & 3.6 &   \\
\end{tabular}
\caption{Average iteration numbers for solution of multiple loading steps (test case~2 of TEAM~problem~32 with $N = 402$ time steps) with anhysteretic material model. Results correspond to different methods and refinement levels.}
\medskip
\label{table:anhysteretic:multiple:timesteps}
\end{table}
Let us note that the average iteration numbers are smaller than in the single loading step. This is the expected behavior, since the solution of the previous time step was used as initial guess for the next one, i.e., the iterative solvers at later time steps start already start in the vicinity of the solution.
Similar to before, the performance of the Quasi-Newton methods is comparable to that of the Newton method, while the fixedpoint method requires significantly more iterations.

\subsection*{Model with hysteresis}
We now repeat the computations of the previous example with the energy-based hysteresis model~\eqref{eqn:material:law:hysterese}.
For the computation at time step $t_i$, the value $\JJ_{k,p}(t_i)=\JJ_k(t_{i-1})$ here corresponds to the magnetic polarization computed in the previous time step. 
The results of our computations are summarized in Table~\ref{table:hysteresis:multiple:timesteps}.
\begin{table}[ht!]
\centering
\setlength\tabcolsep{1.5ex}
\renewcommand{\arraystretch}{1.1}
\begin{tabular}{c||ccccc} 
 & \multicolumn{4}{c}{Average iterations} \\
dof & Fixpoint & BFGS & DFP &   \\
\hline
\hline
$1477$  & 32.3 & 6.7 & 7.2 &   \\
$5789$  & 47.7 & 6.8 & 7.3 &   \\
$22921$ & 53.9 & 6.9 & 7.4 &   \\
$91217$ & 52.1 & 7.0 & 7.4 &   \\
\end{tabular}
\caption{Average iteration numbers for solution of multiple loading steps (test case~2 of TEAM~problem~32 with $N = 402$ time steps) with a hysteretic material model. Results are compared for different methods and refinement levels.}
\label{table:hysteresis:multiple:timesteps}
\end{table}
In comparison to the simulations for the anhysteretic case, the iteration numbers are approximately doubled for all methods under investigation. 
Due to the good initialization at the later time steps, the average iteration numbers are again significantly lower than for the simulations of a single load step.
\begin{figure}[ht!]
\includegraphics[width=0.49\textwidth]{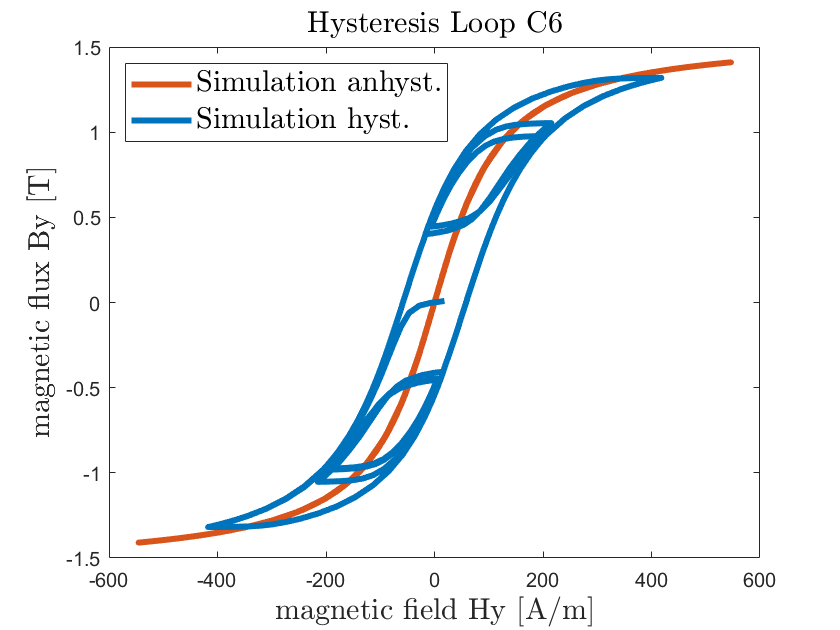}
\includegraphics[width=0.49\textwidth]{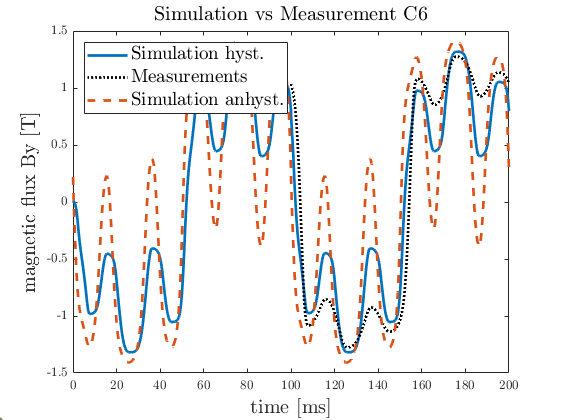}
\caption{Simulation results at the point C6 (see Figure~\ref{fig:1}) for case~2 of TEAM~problem~32. Left: Hysteresis loop for the hysteretic and anhysteretic material model. Right: Comparison of the hysteretic and anhysteretic model with the measurements~\cite{teamproblem32}.\label{fig:2}}
\end{figure}
In Figure~\ref{fig:2}, we compare the vales of the relevant field components 
at a certain point (C6 in Figure~\ref{fig:1}) obtained during simulation of the load cycle for the models with and without hysteresis.
The left plot clearly visualizes the anhysteretic curve as well as the hysteresis loop for the two material laws under investigation. 
The right plot displays the relevant component of the magnetic induction field. For comparison, also corresponding measurement results from~\cite{teamproblem32} are displayed. 
The incorporation of hysteresis clearly improves the accuracy of the simulation.

\section{Discussion}
In this paper, we studied the use of Quasi-Newton updates to obtain approximations of the local magnetic permeabilities during nonlinear finite-element computations. Together with an appropriate choice of the step size, global convergence of the resulting iterative schemes could be proven. 
The methods and proofs are applicable to material models with and without hysteresis. In the smooth setting, we observed a similar fast convergence behavior as for the Newton method which, however, is not applicable for models with hysteresis.
The proposed scheme using local Quasi-Newton updates is applicable to models with and without hysteresis. In our computations, we observed significantly faster convergence than for the fixed-point iteration. A detailed comparison with other fixed-point schemes, see. e.g., \cite{Lavet2013,Prigozhin2016,Hantila2000,Dlala2008hys,Jacques2018}, will be provided elsewhere.

In the current work, we derived a full convergence analysis for the iterations using local Quasi-Newton updates based on a low-order finite-element approximation of the scalar potential formulation of magnetostatics. The main arguments, however, also apply to vector potential formulations and higher order approximations~\cite{Egger2024vec,Friedrich2019}. 
After appropriate time discretization, the algorithms may further be useful for the solution of nonlinear eddy current problems~\cite{Acevedo2023,Bermudez2015,Slodicka2006}, with applications in induction heating~\cite{Chovan2017,Clain1993} and the simulation of superconductivity~\cite{Dular2020,Slodicka2008}.
Investigations in these directions are left for future research.

\begingroup
\footnotesize
\subsection*{Acknowledgements}
This work was supported by the joint DFG/FWF Collaborative Research Centre CREATOR (DFG: Project-ID 492661287/TRR 361; FWF: 10.55776/F90) at TU Darmstadt, TU Graz and JKU Linz.
\endgroup

%\bibliographystyle{abbrv}
%\bibliographystyle{unsrt}
%\bibliographystyle{ieeetr}
% \bibliographystyle{plain}
%\bibliographystyle{unsrtnat}
%\bibliographystyle{abbrvnat}
%\bibliography{refs}

\begin{thebibliography}{36}
\providecommand{\natexlab}[1]{#1}
\providecommand{\url}[1]{\texttt{#1}}
\expandafter\ifx\csname urlstyle\endcsname\relax
  \providecommand{\doi}[1]{doi: #1}\else
  \providecommand{\doi}{doi: \begingroup \urlstyle{rm}\Url}\fi

\bibitem{Hrabovcova2020}
V.~Hrabovcová, P.~Rafajdus, and P.~Makyš.
\newblock \emph{Analysis of Electrical Machines}.
\newblock IntechOpen, Rijeka, 2020.

\bibitem{Salon1995}
S.~J. Salon.
\newblock \emph{Finite Element Analysis of Electrical Machines}.
\newblock Kluwer, 1995.

\bibitem{Silvester91}
P.~P. Silvester and R.~P. Gupta.
\newblock Effective computational models for anisotropic soft {B-H} curves.
\newblock \emph{IEEE Trans. Magn.}, 27:\penalty0 3804--07, 1991.
\newblock \doi{10.1109/20.104930}.

\bibitem{Lavet2013}
V.~Francois-Lavet, F.~Henrotte, L.~Stainier, L.~Noels, and C.~Geuzaine.
\newblock An energy-based variational model of ferromagnetic hysteresis for finite-element computations.
\newblock \emph{J. Comput. Appl. Math.}, 246:\penalty0 243 -- 250, 2013.
\newblock \doi{10.1016/j.cam.2012.06.007}.

\bibitem{Prigozhin2016}
L.~Prigozhin, V.~Sokolovsky, J.~Barrett, and S.~Zirka.
\newblock On the energy-based variational model for vector magnetic hysteresis.
\newblock \emph{IEEE Trans. Magn.}, 52:\penalty0 1--11, 2016.
\newblock \doi{10.1109/TMAG.2016.2599143}.

\bibitem{Meunier2008}
G.~Meunier.
\newblock \emph{The Finite Element Method for Electromagnetic Modeling}.
\newblock Wiley, 2008.

\bibitem{Hantila2000}
F.~I. Hantila, G.~Preda, and M.~Vasiliu.
\newblock Polarization method for static fields.
\newblock \emph{IEEE Trans. Magn.}, 36:\penalty0 672--675, 2000.
\newblock \doi{10.1109/20.877538}.

\bibitem{Fujiwara2005}
K.~Fujiwara, Y.~Okamoto, A.~Kameari, and A.~Ahagon.
\newblock The {N}ewton-{R}aphson method accelerated by using a line search - comparison between energy functional and residual minimization.
\newblock \emph{IEEE Trans. Magn.}, 41:\penalty0 1724--1727, 2005.
\newblock \doi{10.1109/TMAG.2005.846048}.

\bibitem{Borghi2004}
C.~Borghi, M.~Breschi, M.~R. Carraro, and A.~Cristofolini.
\newblock An application of the inexact {N}ewton method to nonlinear magnetostatics.
\newblock \emph{IEEE Trans. Magn.}, 40:\penalty0 1076--1079, 2004.
\newblock \doi{10.1109/TMAG.2004.825178}.

\bibitem{Takahashi2013}
N.~Takahashi, K.~Shimomura, D.~Miyagi, and H.~Kaimori.
\newblock Speed-up of nonlinear magnetic field analysis using a modified fixed-point method.
\newblock \emph{COMPEL}, 32:\penalty0 1749--1759, 2013.
\newblock \doi{10.1108/COMPEL-04-2013-0126}.

\bibitem{Dlala2008}
E.~Dlala and A.~Arkkio.
\newblock Analysis of the convergence of the fixed-point method used for solving nonlinear rotational magnetic field problems.
\newblock \emph{IEEE Trans. Magn.}, 44:\penalty0 473--478, 2008.
\newblock \doi{10.1109/TMAG.2007.914888}.

\bibitem{Nocedal2006}
J.~Nocedal and S.~Wright.
\newblock \emph{Numerical Optimization. 2nd ed.}
\newblock Springer, 2006.

\bibitem{Dlala2007}
E.~Dlala and A.~Belahcen, A.and~Arkkio.
\newblock Locally convergent fixed-point method for solving time-stepping nonlinear field problems.
\newblock \emph{IEEE Trans. Magn.}, 43:\penalty0 3969--3975, 2007.
\newblock \doi{10.1109/TMAG.2007.904819}.

\bibitem{Nocedal1980}
J.~Nocedal.
\newblock Updating {Q}uasi-{N}ewton matrices with limited storage.
\newblock \emph{Math. Comp.}, 35:\penalty0 773--782, 1980.
\newblock \doi{10.1090/S0025-5718-1980-0572855-7}.

\bibitem{Geradin1981}
M.~Geradin, S.~Idelsohn, and M.~Hogge.
\newblock Computational strategies for the solution of large nonlinear problems via {Q}uasi-{N}ewton methods.
\newblock \emph{Computers \& Structures}, 13:\penalty0 73--81, 1981.
\newblock \doi{10.1016/0045-7949(81)90110-3}.

\bibitem{Papadrakakis1991}
M.~Papadrakakis and V.~Balopoulos.
\newblock Improved {Q}uasi‐{N}ewton methods for large nonlinear problems.
\newblock \emph{J. Engrg. Mech.}, 117:\penalty0 1201--1219, 1991.
\newblock \doi{10.1061/(ASCE)0733-9399(1991)117:6(1201)}.

\bibitem{Domenig2024}
L.~D. Domenig, K.~Roppert, and M.~Kaltenbacher.
\newblock Incorporation of a {3-D} energy-based vector hysteresis model into the finite element method using a reduced scalar potential formulation.
\newblock \emph{IEEE Trans. Magn.}, 60:\penalty0 1--8, 2024.
\newblock \doi{10.1109/TMAG.2024.3387354}.

\bibitem{Engertsberger23}
F.~Engertsberger.
\newblock The scalar potential approach in nonlinear magnetostatics.
\newblock URL \url{https://resolver.obvsg.at/urn:nbn:at:at-ubl:1-61792}.
\newblock Master Thesis, Johannes Kepler University Linz, 2023.

\bibitem{Braess2007}
D.~Braess.
\newblock \emph{Finite Elements: Theory, Fast Solvers, and Applications in Solid Mechanics}.
\newblock Cambridge University Press, 3rd edition, 2007.


\bibitem{Deuflhard2011}
P.~Deuflhard.
\newblock \emph{Newton Methods for Nonlinear Problems: Affine Invariance and Adaptive Algorithms}.
\newblock Springer Series in Computational Mathematics. Springer, %Berlin, Heidelberg, 
2011.

\bibitem{teamproblem32}
O.~Bottauscio, M.~Chiampi, C.~Ragusa, L.~Rege, and M.~Repetto.
\newblock A test case for validation of magnetic field analysis with vector hysteresis.
\newblock \emph{IEEE Trans. Magn.}, 38:\penalty0 893--896, 2002.
\newblock \doi{10.1109/20.996230}.

\bibitem{Danskin1966}
J.~M. Danskin.
\newblock The theory of max-min, with applications.
\newblock \emph{SIAM J. Appl. Math.}, 14:\penalty0 641--664, 1966.
\newblock \doi{10.1137/0114053}.

\bibitem{Kaltenbacher2022}
M.~Kaltenbacher, K.~Roppert, L.~D. Domenig, and H.~Egger.
\newblock Comparison of energy based hysteresis models.
\newblock In \emph{23rd International Conference on the Computation of Electromagnetic Fields (COMPUMAG), Cancun, Mexico, 2022}, pages 1--4, 2022.
\newblock \doi{10.1109/COMPUMAG55718.2022.9827509}.

\bibitem{Ulbrich2011}
M.~Ulbrich.
\newblock \emph{Semismooth {N}ewton methods for variational inequalities and constrained optimization problems in function spaces}.
\newblock SIAM, PA, 2011.

\bibitem{Willerich2019}
S.~Willerich and H.-G. Herzog.
\newblock A non-smooth {N}ewton method for the solution of magnetostatic field problems with hysteresis.
\newblock \emph{COMPEL}, pages 1584--1594, 2019.
\newblock \doi{10.1108/COMPEL-12-2018-0499}.

\bibitem{Dlala2008hys}
E.~Dlala, A.~Belahcen, and A.~Arkkio.
\newblock A fast fixed-point method for solving magnetic field problems in media of hysteresis.
\newblock \emph{IEEE Trans. Magn.}, 44:\penalty0 1214--1217, 2008.
\newblock \doi{10.1109/TMAG.2007.916673}.

\bibitem{Jacques2018}
K~Jacques.
\newblock \emph{Energy-Based Magnetic Hysteresis Models - Theoretical Development and Finite Element Formulations}.
\newblock PhD thesis, Université de Liège, 2018.

\bibitem{Egger2024vec}
H.~Egger, F.~Engertsberger, and B.~Radu.
\newblock On the convergence of higher order finite element methods for nonlinear magnetostatics, 2024.
\newblock arXiv:2408.12283.

\bibitem{Friedrich2019}
L.~Friedrich, M.~Curti, B.~L.~J. Gysen, and E.~A. Lomonova.
\newblock High-order methods applied to nonlinear magnetostatic problems.
\newblock \emph{Math. Comput. Appl.}, 24:\penalty0 1--15, 2019.
\newblock \doi{10.3390/mca24010019}.

\bibitem{Acevedo2023}
R.~Acevedo, C.~Gómez, B.~López-Rodríguez, and M.~Pilar~Salgado.
\newblock Numerical analysis of a {FEM} based on a time-primitive of the electric field for solving a nonlinear transient eddy current problem.
\newblock \emph{Appl. Numer. Math.}, 192:\penalty0 261--279, 2023.
\newblock \doi{10.1016/j.apnum.2023.06.009}.

\bibitem{Bermudez2015}
A.~Bermúdez, D.~Gómez, R.~Rodríguez, and P.~Venegas.
\newblock Numerical analysis of a transient non-linear axisymmetric eddy current model.
\newblock \emph{Comp. Math. Appl.}, 70:\penalty0 1984--2005, 2015.
\newblock \doi{10.1016/j.camwa.2015.08.017}.

\bibitem{Slodicka2006}
M.~Slodička.
\newblock {A time discretization scheme for a non-linear degenerate eddy current model for ferromagnetic materials}.
\newblock \emph{IMA Journal of Numerical Analysis}, 26:\penalty0 173--187, 2006.
\newblock \doi{10.1093/imanum/dri030}.

\bibitem{Chovan2017}
J.~Chovan, C.~Geuzaine, and M.Slodička.
\newblock A–$\varphi$ formulation of a mathematical model for the induction hardening process with a nonlinear law for the magnetic field.
\newblock \emph{Comp. Meth. Appl. Mech. Engrg.}, 321:\penalty0 294--315, 2017.
\newblock \doi{10.1016/j.cma.2017.03.045}.

\bibitem{Clain1993}
S.~Clain, J.~Rappaz, M.~Swierkosz, and R.~Touzani.
\newblock Numerical modeling of induction heating for two-dimensional geometries.
\newblock \emph{Math. Mod. Meth. Appl. Sci.}, 3:\penalty0 805--822, 1993.
\newblock \doi{10.1142/S0218202593000400}.

\bibitem{Dular2020}
J.~Dular, C.~Geuzaine, and B.~Vanderheyden.
\newblock Finite-element formulations for systems with high-temperature superconductors.
\newblock \emph{IEEE Trans. Appl. Supercond.}, 30:\penalty0 1--13, 2020.
\newblock \doi{10.1109/TASC.2019.2935429}.

\bibitem{Slodicka2008}
M.~Slodička.
\newblock Nonlinear diffusion in type-ii superconductors.
\newblock \emph{J. Comp. Appl. Math.}, 215:\penalty0 568--576, 2008.
\newblock \doi{10.1016/j.cam.2006.03.055}.

\end{thebibliography}

%\newpage 

\newpage 

\appendix
\section{Proof of Theorem~\ref{thm:2}}
For completeness, we now present a detailed proof of Theorem~\ref{thm:2}. 
The main challenge is to show that the constants $C$ and $q$ in \eqref{eq:convergence} are independent of the mesh~$\Th$. 
To show this, we use an analysis on the continuous level, i.e., involving the finite-element functions $\psi_h(x)=\sum_i u_i N_i(x)$ rather than the vectors $\uu \in \RR^N$ arising in the implementation. 
The proof will be split into several steps.

\subsection*{Step~1.}
We denote by $g(v_h) := \langle w_*(\hs-\nabla v_h), 1\rangle_h$ the total co-energy associated with the finite-element function $v_h \in V_h$. Comparing with the notation used in the proof of Theorem~\ref{thm:1}, we see that $g(v_h) = f(\vv)$ where $\vv$ is the coordinate vector associated with $v_h=\sum_i v_i N_i$.
Hence $g : V_h \to \RR$ is strongly convex, and the solution $\psi_h$ of~\eqref{eq:fem} corresponds to the unique minimizer of $g$ over $V_h$. 

\subsection*{Step~2.}
To measure the distance between $v_h \in V_h$ and the solution $\psi_h$ of \eqref{eq:fem}, we consider the energy difference $g(v_h|\psi_h) := g(v_h) - g(\psi_h)$. Since $\psi_h$ is the unique minimizer of $g$ over $V_h$, we see that $g(v_h|\psi_h) \ge 0$ and $g(v_h|\psi_h)=0$ if, and only if, $v_h = \psi_h$. In addition, we have 
\begin{align} \label{eq:step2}
\frac{\gamma}{2} \|\nabla (v_h - \psi_h)\|_h^2 \le g(v_h|\psi_h) \le \frac{L}{2} \|\nabla (v_h - \psi_h)\|_h^2 \qquad \forall v_h \in V_h,
\end{align}
which can be seen as follows: By the fundamental theorem of calculus, we get
\begin{align*}
&w_*(\bb) - w_*(\aa) 
= \int_0^1 \langle \partial_{\hh} w_*(\aa + t (\bb-\aa)), \bb-\aa \rangle \, dt  \\
&= \int_0^1 \langle \partial_{\hh} w_*(\aa + t (\bb-\aa)) - \partial_h w_*(\aa), \bb-\aa \rangle \, dt + \langle \partial_{\hh} w_*(\aa), \bb-\aa\rangle.
\end{align*}
These formulas concern a single quadrature point $x_T$ and $\langle \aa,\bb \rangle = \aa \cdot \bb$ is the Euclidean scalar product on $\RR^d$.
The inequalities in \eqref{eq:step2} then follow by applying this identity with $\aa=\hs - \nabla \psi_h$ and $\bb=\hs-\nabla v_h$ for every point $\xx_T$, employing the properties of $\partial_{\hh} w_*$, summing over $T \in \Th$, and using \eqref{eq:fem}.

\subsection*{Step~3.}
The left hand side of \eqref{eq:linfem} induces a scalar product 
\begin{align} \label{eq:sp}
\langle u_h,v_h\rangle_{1,\mmu^n} := \langle \mmu^n \nabla u_h, \nabla v_h\rangle_h
\end{align} 
on the finite-element space $V_h$, and we denote by $\|v_h\|_{1,\mmu^n} = \sqrt{\langle v_h, v_h\rangle_{1,\mmu^n}}$ the associated norm. From the bounds \eqref{eq:mu} on $\mmu^n$, we immediately see that 
\begin{align} \label{eq:step3}
\mu_1 \|\nabla v_h\|^2_h \le \|v_h\|^2_{1,\mmu^n} \le \mu_2 \|\nabla v_h\|_h^2 \qquad \forall v_h \in V_h.
\end{align}

\subsection*{Step~4.}
Let $\delta \psi_h^n$ denote the solution of \eqref{eq:linfem}. 
Then we can show that 
\begin{align} \label{eq:step4}
\|\delta \psi_h^n\|_{1,\mmu^n}^2 \ge \frac{\gamma^2}{\mu_2} \|\nabla (\psi_h^n - \psi_h)\|_h^2.
\end{align}
To see this, we use a well-known relation between scalar products and norms, which together with \eqref{eq:sp} yields the identity
\begin{align*}
\|\delta \psi_h^n\|_{1,\mmu^n} 
&= \sup_{v_h \in V_h} \frac{\langle  \delta \psi_h^n, v_h \rangle_{1,\mmu^n}}{\|v_h\|_{1,\mmu^n}} 
= \sup_{v_h \in V_h} \frac{\langle  \mmu^n \nabla \delta \psi_h^n, \nabla v_h \rangle_{h}}{\|v_h\|_{1,\mmu^n}}.
\end{align*}
By using \eqref{eq:linfem} and \eqref{eq:fem}, and choosing $v_h = \psi_h^n - \psi_h$, we then further see that 
\begin{align*}
\|\delta \psi_h^n\|_{1,\mmu^n} 
&\ge \frac{\langle \partial_{\hh} w_*(\hs - \nabla \psi_h^n) - \partial_{\hh} w_*(\hs - \nabla \psi_h), \nabla \psi_h^n - \nabla \psi^h\rangle_h}{\|\psi_h^n - \psi_h\|_{1,\mmu^n}}.
%&\ge \frac{\gamma \|\nabla (\psi_h^n - \psi_h)\|^2_h}{\mu_2 \|\nabla (\psi_h^n - \psi_h)\|_h}. 
\end{align*}
The inequality \eqref{eq:step4} now follows from the strong monotonicity of $\partial_{\hh} w_*(\cdot)$ assumed in Theorem~\ref{thm:1} and employing the right bound in \eqref{eq:step3}. 

\subsection*{Step~5.}
Using similar arguments as in Step~2 and Step~4, we obtain
\begin{align} 
g(\psi_h^n + \tau \delta \psi_h^n|\psi_h^n) 
%&= g(\psi_h^{n+1}) - g(\psi_h^n) \\
&\le -\tau \|\delta \psi_h^n\|^2_{1,\mmu^n} + \tfrac{\tau^2}{2} L \|\delta \psi_h^n\|^2_{h} \notag \\
&\le -\tau (1-\tfrac{\tau}{2} \tfrac{L}{\mu_1}) \, \|\delta \psi_h^n\|^2_{1,\mmu^n}.\label{eq:step5}
\end{align}
Hence the total energy is strictly decreasing if $\delta \psi_h^n \ne 0$ and if we choose the step size $\tau$ sufficviently small, i.e., such that $0<\tau<\frac{2 \mu_1}{L}$.

\subsection*{Step~6.}
By combination of the previous estimates, we immediately obtain
\begin{align*}
g(\psi_h^{n+1}|\psi_h) 
&= g(\psi_h^{n+1}|\psi_h^n) + g(\psi_h^n|\psi_h) \\
&\le -\tau^n (1-\tfrac{\tau^n}{2} \tfrac{L}{\mu_1}) \|\delta \psi_h^n\|_{1,\mmu^n}^2 + g(\psi_h^n|\psi_h) \\
&\le (1 - \tau^n (1-\tfrac{\tau^n}{2} \tfrac{L}{\mu_1}) \tfrac{\gamma^2}{\mu_2} \tfrac{L}{2})  \, g(\psi_h^n|\psi_h). 
\end{align*}
This shows that $g(\psi_h^{n+1}|\psi_h^n) \le q(\tau^n) \, g(\psi_h^n|\psi_h)$ with $q(\tau)=1 - \tau (1-\tfrac{\tau}{2} \tfrac{L}{\mu_2}) \tfrac{\gamma^2}{2}\tfrac{L}{2}$.  
Let us note that $0 < q(\tau) < 1$ for $0 < \tau < \frac{2 \mu_2}{L}$, i.e., the energy distance to the solution $\psi_h$ is decreasing for appropriate choice of the step size $\tau^n$.

\subsection*{Step~7.}
We fix $0<\bar \tau < \frac{2 \mu_2}{L}$ and set $\tau^n=\bar \tau$ for all steps of the iteration. By applying the above inequality recursively, we then get
\begin{align*}
g(\psi_h^n|\psi_h) \le q^n g(\psi_h^0|\psi_h)
\end{align*}
with contraction factor $q = q(\bar \tau) < 1$, which only depends on $\gamma$, $L$, $\mu_1$, $\mu_2$, and our choice of $\bar \tau$. 
Application of \eqref{eq:step3} then leads to \eqref{eq:convergence} with this factor $q$ and constant $C=\frac{\mu_2}{\mu_1}$. 
This completes the proof of Theorem~\ref{thm:2}. \qed

\begin{remark} \label{rem:armijo2}
From inequality \eqref{eq:step5} one can deduce that the Armijo-backtracking strategy \eqref{eq:armijo} yields a step size $\tau^n$ satisfying $2 \rho (1-\sigma) \frac{\mu_1}{L} < \tau^n \le \tau_{max}$. By minor modification of the arguments in Step~6 and 7, this again leads to global linear convergence $0<q<1$ independent of $\Th$; see~\cite{Engertsberger23} for details.
\end{remark}

\end{document}